\newcommand{\RR}{\mathbb R}
\newcommand{\NN}{\mathbb N}
\renewcommand{\H}{\mathcal H}
\newcommand{\eps}{\varepsilon}
\renewcommand{\vec}[1]{\mathbf{#1}}
\newcommand{\abs}[1]{\left\vert #1 \right\vert}
\newcommand{\enclose}[1]{\left(#1\right)}
\newcommand{\Enclose}[1]{\left[#1\right]}
\newcommand{\ENCLOSE}[1]{\left\{#1\right\}}
\DeclareMathOperator{\diam}{diam}
\newcommand{\defeq}{:=}
\newtheorem{theorem}{Theorem}[section]
\newtheorem{proposition}[theorem]{Proposition}
\newtheorem{lemma}[theorem]{Lemma}
\theoremstyle{definition}
\theoremstyle{remark}
\newtheorem{remark}[theorem]{Remark}
\newtheorem{example}[theorem]{Example}
\author[Novaga]{Matteo Novaga}
\address[Matteo Novaga, Emanuele Paolini, Vincenzo Tortorelli]{Dipartimento di Matematica, Universit\`a di Pisa \\
	Largo Bruno Pontecorvo 5 \\ I-56127, Pisa}
\email[Matteo Novaga]{matteo.novaga@unipi.it}
\author[Paolini]{Emanuele Paolini} 
\email[Emanuele Paolini]{emanuele.paolini@unipi.it}
\author[Stepanov]{Eugene Stepanov}
\address[Eugene Stepanov]{%
  Scuola Normale Superiore, Piazza dei Cavalieri 6, Pisa, Italy
  \and 
  St.Petersburg Branch of the Steklov Mathematical Institute of the Russian Academy of Sciences,
	St.Petersburg, Russia
	\and
	Faculty of Mathematics, Higher School of Economics, Moscow
}
\email[Eugene Stepanov]{stepanov.eugene@gmail.com}
\author[Tortorelli]{Vincenzo Maria Tortorelli}
\email[Vincenzo Maria Tortorelli]{tortorelli@dm.unipi.it}
\thanks{
	The first and second authors are members of the INDAM/GNAMPA and were supported by the PRIN Project 2019/24.
	The work of the third author has been partially financed by the RFBR grant \#20-01-00630 A
}
\subjclass[2010]{Primary 53C65. Secondary 49Q15, 60H05.}
\keywords{Isoperimetric clusters, isoperimetric sets, regularity}
\date{\today}
\title[Isoperimetric planar clusters with infinitely many regions]{Isoperimetric planar clusters with infinitely many regions}
\begin{document}
\maketitle

\begin{abstract}
  An infinite cluster $\vec E$ in $\RR^d$ is a sequence of disjoint 
  measurable sets $E_k\subset \RR^d$, $k\in \NN$, called regions of the cluster.
  Given the volumes $a_k\ge 0$ of the regions $E_k$, a 
  natural question is the existence of a cluster $\vec E$
  which has finite and minimal perimeter $P(\vec E)$ among all clusters with regions having such volumes.
  We prove that such a cluster exists in the planar case $d=2$, for any 
  choice of the areas $a_k$ with $\sum \sqrt a_k < \infty$.
  We also show the existence of a bounded minimizer with the property 
  $P(\vec E)=\H^1(\partial \vec E)$, where $\partial \vec E$ denotes the measure theoretic boundary of the cluster.
  We also provide several examples of infinite isoperimetric clusters for anisotropic and fractional perimeters.
\end{abstract}
    
 \tableofcontents   
    
\section{Introduction}
  A finite \emph{cluster} $\vec E$  
  is a sequence
 $\vec E = (E_1, \dots E_k, \dots, E_N)$ of  measurable sets, 
 such that
 $|E_k\cap E_j|=0$ for $k\neq j$, where $|\cdot|$ denotes the Lebesgue measure (usually called \emph{volume}).
 The sets $E_j$ are called \emph{regions} of the cluster $\vec E$
 and $E_0 \defeq \RR^d\setminus\bigcup_{k=1}^{\infty} E_k$ 
 is called \emph{external region}.
 We denote the sequence of volumes of the regions of the cluster $\vec E$ as
 \begin{equation}\label{eq:def_m}
 \vec m(\vec E) \defeq 
(|E_1|, |E_2|, \dots, |E_N|)
 \end{equation}
 and we call
\emph{perimeter} of the cluster the quantity
 \begin{equation}\label{eq:def_P}
 P(\vec E) \defeq \frac 1 2 \Enclose{P(E_0) + \sum_{k=1}^{N} P(E_k)},
 \end{equation}
 where $P$ is the Caccioppoli perimeter.
A cluster $\vec E$ is called
 \emph{minimal},  or \textit{isoperimetric}, if 
 \[
 P(\vec E) = \min\ENCLOSE{P(\vec F)\colon \vec m(\vec F)=\vec m(\vec E)}.  
 \]
 
  In this paper we consider infinite clusters, i.e., 
  infinite sequences $\vec E=(E_k)_{k\in \NN}$ of 
  essentially pairwise disjoint regions: $\abs{E_j\cap E_i}=0$ for $i\neq j$
  (this can be interpreted as a model for a soap foam).
  Note that a finite cluster with $N$ regions, 
  can also be considered a particular case of an infinite cluster 
  for example by posing $E_k=\emptyset$ for $k>N$.   
  Clusters with infinitely many regions of equal areas
  were considered in~\cite{Hales01}, where it has been shown that the 
  honeycomb cluster is the unique minimizer with respect to compact perturbations.
  Infinite clusters have been considered also
  in~\cite{Leonardi02,Hirst67,Boyd82}, where 
  the variational curvature is prescribed, and in~\cite{NoPaStTo21}, 
  where existence of generalized minimizers for both finite 
  and infinite isoperimetric clusters has been proven in the general setting
  of homogeneous metric measure spaces.

\begin{figure}
  \begin{center}
    \includegraphics[width=124pt]{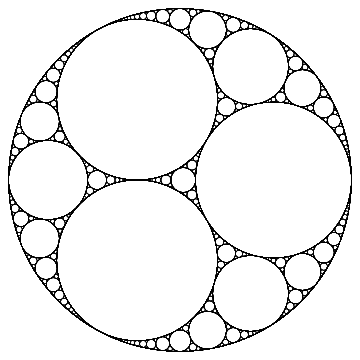}
    \hspace{1cm}
    \begin{tikzpicture}[thick]
      \draw (0,0) -- (4.0,0) -- (4.0,4.0) -- (0,4.0) -- cycle;
      \draw (2.0,0) -- (2.0,4.0); \draw(0,2.0) -- (4.0,2.0);
      \draw (1.0,0) -- (1.0,2.0); \draw(0,1.0) -- (2.0,1.0);
      \draw (0.5,0) -- (0.5,1.0); \draw(0,0.5) -- (1.0,0.5);
      \draw (0.25,0) -- (0.25,0.5); \draw(0,0.25) -- (0.5,0.25);
      \draw (0.125,0) -- (0.125,0.25); \draw(0,0.125) -- (0.25,0.125);
      \draw (0.0625,0) -- (0.0625,0.125); \draw(0,0.0625) -- (0.125,0.0625);
      \draw (0.03125,0) -- (0.03125,0.0625); \draw(0,0.03125) -- (0.0625,0.03125);
      \draw (0.015625,0) -- (0.015625,0.03125); \draw(0,0.015625) -- (0.03125,0.015625);
    \end{tikzpicture}
    \caption{The Apollonian gasket, on the left-hand side, is 
    a cluster with minimal fractional perimeter.
    On the right-hand side a similar construction with squares: this is a minimal 
    cluster with respect to the perimeter induced by the Manhattan distance.}
    \label{fig:apollonio}
\end{center}
\end{figure}
  
An interesting example of infinite cluster, detailed in Example~\ref{ex:apollonio}
(see Figure~\ref{fig:apollonio})
is the Apollonian packing of a circle (see \cite{Kasner378}).
In fact this cluster is composed by isoperimetric regions and hence 
should trivially have minimal perimeter among clusters with regions of the same 
areas. 
Actually, it turns out that this cluster has infinite perimeter 
and hence all clusters with same prescribed areas have
infinite perimeter too.
Note that very few explicit examples of minimal clusters are 
known~\cite{Foi93,Wic04,HutMorRitRos02,PaoTor20,MilNee22}.
 Nevertheless, quite curiously, Apollonian packings give nontrivial 
 examples of infinite isoperimetric clusters for fractional perimeters
 \cite{CafRoqSav10,ColMag17,CesNov20}, 
 as shown in Example~\ref{ex:apollonio}. 
 An even simpler example of an infinite isoperimetric planar 
 cluster
 is given in Example~\ref{ex:squares}
 (see Figure~\ref{fig:apollonio} again)
where the Caccioppoli perimeter is replaced by an anisotropic perimeter 
functional \cite{LawMor94,MorChrSco98,NovPao05,Car17,DePDeRGhi20}.

Our main result, Theorem~\ref{th:main}, states that 
if $d=2$ (planar case), given any sequence of positive numbers 
$\vec a = (a_1,a_2, \dots, a_k, \dots)$
such that $\sum_{k=0}^\infty \sqrt{a_k}< +\infty$,
there exists a minimal cluster $\vec E$ in $\RR^2$
with $\vec m(\vec E) = \vec a$.
The assumption on $\vec a$ is necessary to have at least 
a competitor cluster with finite perimeter.
The proof relies on two facts which are only available in the planar 
case: the isodiametric inequality for connected sets
and the semicontinuity of the length of connected sets (Go\l\c ab theorem).


\section{Notation and preliminaries}

\subsection{Perimeters and boundaries}
For a set  $E\subset \RR^d$ with finite perimeter one can define
the \emph{reduced boundary} $\partial^* E$ which is the set 
of boundary points $x$ where the outer normal vector $\nu_E(x)$ can be defined.
One has $D1_E = \nu_E\cdot \H^{d-1}\llcorner \partial^*E$ 
where $1_E$ is the characteristic function of $E$ and 
$D1_E$ is its distributional derivative (the latter is a vector valued measure and its total variation is customarily denoted by
$|D1_E|$).
The measure theoretic boundary of a measurable set $E$ is defined by 
\[
  \partial E 
   \defeq \{ x \in \RR^d\colon  0 < \abs{E\cap B_\rho(x)}< \abs{B_\rho(x)} \quad \mbox{for all $\rho>0$}\}.
\]
The corresponding notions for clusters can be defined as follows:
\begin{align*}
\partial^*\vec E 
&\defeq \bigcup_{k=1}^{+\infty} \bigcup_{j=0}^{k-1} \partial^* E_k \cap \partial^* E_j,\\
  \partial \vec E 
  &\defeq \big\{ x\in \RR^d\colon  
  0 < \abs{E_k\cap B_\rho(x)} < \abs{B_\rho(x)}\quad \\
  &\qquad\qquad \mbox{for all $\rho>0$ and some $k=k(\rho,x)\in\NN$}\big\}.
\end{align*}
Clearly $\partial^* \vec E\subseteq \partial \vec E$ because given $x\in \partial^*\vec E$
there exists $k$ such that $x\in \partial^* E_k$, while $\partial E_k \subseteq 
\partial \vec E$ for all $k$.
Also it is easy to check that $\partial \vec E$ is closed (and it is the closure of the 
union of all the measure theoretic boundaries $\partial E_k$). 
Moreover the following result holds true.

\begin{proposition}\label{prop:regularity}
  If $\vec E$ is a cluster with finite perimeter, then
$P(\vec E) = \H^{d-1}(\partial^*\vec E)$.
\end{proposition}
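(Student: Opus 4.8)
The plan is to derive the identity from the De Giorgi structure theorem applied to the individual regions, combined with a multiplicity count on the reduced boundaries. Since $P(\vec E)<\infty$, every region $E_k$ --- the external one $E_0$ included --- has finite perimeter, so $|D1_{E_k}|=\H^{d-1}\llcorner\partial^*E_k$ and $2P(\vec E)=\sum_{k\ge0}\H^{d-1}(\partial^*E_k)$. Put $\Sigma\defeq\bigcup_{k\ge0}\partial^*E_k$ and, for $x\in\RR^d$, let $n(x)$ be the number of indices $k$ with $x\in\partial^*E_k$, so that $n=\sum_{k\ge0}1_{\partial^*E_k}$. By Tonelli's theorem,
\[
2P(\vec E)=\sum_{k\ge0}\int_\Sigma 1_{\partial^*E_k}\,d\H^{d-1}=\int_\Sigma n\,d\H^{d-1},
\]
and in particular $\H^{d-1}(\Sigma)\le 2P(\vec E)<\infty$. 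Since $\partial^*\vec E=\{x\in\Sigma:\ n(x)\ge2\}$, the proposition will follow once we show that $n=2$ holds $\H^{d-1}$-a.e.\ on $\Sigma$: then $\int_\Sigma n\,d\H^{d-1}=2\H^{d-1}(\Sigma)=2\H^{d-1}(\partial^*\vec E)$.

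That $n\le2$ at every point is immediate. If $x$ belonged to $\partial^*E_{k_1}\cap\partial^*E_{k_2}\cap\partial^*E_{k_3}$ with distinct indices, the structure theorem would give that each $E_{k_i}$ has density exactly $\tfrac12$ at $x$, whereas $\sum_{i=1}^3|E_{k_i}\cap B_\rho(x)|\le|B_\rho(x)|$ by the essential disjointness of the regions; dividing by $|B_\rho(x)|$ and letting $\rho\to0$ would force $\tfrac32\le1$.

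The remaining --- and main --- point is that $\H^{d-1}$-a.e.\ point of $\partial^*E_k$ belongs to $\partial^*E_j$ for some $j\ne k$. Federer's density dichotomy does not suffice here, since for an infinite cluster a point of $\partial^*E_k$ could a priori be surrounded by infinitely many regions each of density $0$; instead I would argue through the derivatives. Fix $k$ and set $F\defeq\RR^d\setminus E_k$, so that $F$ has finite perimeter, $\partial^*F=\partial^*E_k$, and $|D1_F|=\H^{d-1}\llcorner\partial^*E_k$. Since $1_F=\sum_{j\ne k}1_{E_j}$ a.e.\ and $\sum_{j\ne k}|D1_{E_j}|(\RR^d)=\sum_{j\ne k}P(E_j)\le2P(\vec E)<\infty$, the series $\sum_{j\ne k}D1_{E_j}$ converges in the total-variation norm of finite Radon measures; its sum must be $D1_F$, since $1_{\bigcup_{j\ne k,\,j\le m}E_j}\to1_F$ in $L^1_\loc$ and hence $D1_{\bigcup_{j\ne k,\,j\le m}E_j}\to D1_F$ as distributions. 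Therefore, as measures,
\[
|D1_F|\ =\ \Big|\sum_{j\ne k}D1_{E_j}\Big|\ \le\ \sum_{j\ne k}|D1_{E_j}|\ =\ \sum_{j\ne k}\H^{d-1}\llcorner\partial^*E_j .
\]
Evaluating this inequality on the Borel set $B\defeq\partial^*E_k\setminus\bigcup_{j\ne k}\partial^*E_j$, and using that $B\subseteq\partial^*E_k$ while $B\cap\partial^*E_j=\emptyset$ for every $j\ne k$, we obtain $\H^{d-1}(B)=|D1_F|(B)=0$. Taking the countable union over $k$ shows that $n\ge2$ $\H^{d-1}$-a.e.\ on $\Sigma$, which together with $n\le2$ proves the claim.

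The step I expect to demand the most care is the identity $D1_F=\sum_{j\ne k}D1_{E_j}$: it is precisely here that the global assumption $\sum_kP(E_k)<\infty$ (i.e.\ $P(\vec E)<\infty$) is used, ensuring that the series converges in total variation and not merely locally. Without it the right-hand side need not define a locally finite measure, and one would be forced to localize the whole argument to balls.
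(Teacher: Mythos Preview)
Your proof is correct and follows essentially the same approach as the paper: a multiplicity count showing that $\H^{d-1}$-a.e.\ point of $\bigcup_k\partial^*E_k$ lies in exactly two reduced boundaries, with the key step (ruling out multiplicity one) resting on the identity $\sum_k D1_{E_k}=0$ --- you phrase it as $D1_{\RR^d\setminus E_k}=\sum_{j\neq k}D1_{E_j}$ and bound the total variation, the paper uses it in a short contradiction argument on the set $X_1=\{n=1\}$. The only cosmetic differences are that you argue $n\le 2$ via densities rather than via the approximate tangent hyperplane, and you give a direct estimate where the paper argues by contradiction.
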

  \begin{proof}
    Consider the sets $X_n$, for $1\le n \le \infty$, defined by
    \[
      X_n \defeq \ENCLOSE{x\in \RR^d\colon \#\ENCLOSE{k\in \NN \colon x\in \partial^* E_k}= n}
    \]
    (notice that $k=0\in \NN$, the external region, is included in the count).   
 It is clear that $X_n=\emptyset$ for all $n\ge 3$ because in every point of $\partial^* E_k$ there is an approximate tangent hyper-plane which can only be shared by two regions. 

We claim that $\H^{d-1}(X_1)=0$. To this aim suppose by contradiction that $\H^{d-1}(X_1)>0$. Then there exists a $j\in \NN$ such 
    that 
    \[\abs{D1_{E_j}}(X_1)= \H^{d-1}(X_1\cap \partial^* E_j)>0,\] because $X_1$ is contained in the countable union
    $\cup_{j=0}^\infty X_1\cap \partial^* E_j$.
    Hence there is a subset $A\subset X_1\cap \partial^* E_j$ such that $D 1_{E_j}(A)\neq \vec 0$.
    Notice that $\sum_{k=0}^\infty 1_{E_k} = 1$, 
    hence also
    $\sum_k D 1_{E_k} = \vec 0$.
    Since $D 1_{E_j}(A)\neq \vec 0$ there must exist at least another index $k\neq j$ 
    such that $D 1_{E_k}(A)\neq \vec 0$, hence  
    $\H^{d-1}(A\cap \partial^*E_k)>0$. But then 
    \[\emptyset\not=
    A\cap \partial^*E_k\subset X_1\cap \partial^* E_j\cap \partial^*E_k, \quad j\neq k, \] contrary to the definition of $X_1$, which proves the claim. 
    
    In conclusion, the union of all reduced boundaries $\partial^* E_k$ is contained in $X_2$ 
    up to a $\H^{d-1}$-negligible set. Hence
        \begin{align*}
P(\vec E) 
      &= \frac 1 2 \sum_{k=0}^{+\infty} P(E_k) 
      = \frac 1 2 \sum_{k=0}^{+\infty} \H^{d-1}(\partial^*E_k \cap X_2)=\\
      &=\frac 1 2 \sum_{k=0}^{+\infty}\sum_{j\neq k} \H^{d-1}(\partial^*E_k\cap \partial^* E_j)= \sum_{k=0}^{+\infty}\sum_{j=k+1}^{+\infty} \H^{d-1}(\partial^*E_k\cap \partial^* E_j)=\\
    &  = \H^{d-1}\left(\bigsqcup_{k=0}^{+\infty}\bigsqcup_{j=k+1}^{+\infty}\partial^*E_k\cap \partial^*E_j\right)= \H^{d-1}(\partial^* \vec E)
    \end{align*}
    as claimed.
  \end{proof}

\subsection{Auxiliary results}

In the following theorem we collect known existence and regularity results 
for \textit{finite} minimal clusters from~\cite{Morgan87,Maggi12-GMTbook}.

\begin{theorem}[existence and regularity of planar $N$-clusters]
  \label{th:minimal}
  Let $a_1, a_2,$ $ \dots, a_N$ be given positive real numbers.
  Then there exists a minmal $N$-cluster $\vec E = (E_1,\dots E_N)$
  in $\RR^s$,
  with $\abs{E_k} = a_k$ for $k=1,\dots, N$.
  If $\vec E$ is a minimal $N$-cluster and $d=2$, then
  $\partial \vec E$ is a pathwise connected set composed by circular arcs or line segments
  joining in triples at a finite number of vertices. 
  Moreover in this case $P(\vec E) = \H^1(\partial \vec E)$.
\end{theorem}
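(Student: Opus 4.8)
\emph{Plan.} The plan is to treat the three assertions in turn: existence of a minimal $N$-cluster, its planar regularity, and the identity $P(\vec E)=\H^1(\partial\vec E)$. The first and last are soft, the regularity statement being the substantial one.

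\emph{Existence.} First I would take a minimizing sequence $(\vec E^n)_n$ of $N$-clusters with the prescribed volumes and with $P(\vec E^n)$ tending to the infimum. The uniform perimeter bound makes each $1_{E^n_k}$ bounded in $BV_\loc(\RR^d)$, so the only obstruction to compactness is loss of mass at infinity. This is ruled out by a classical truncation argument: replacing the cluster by its intersection with a large ball $B_R$ saves perimeter at least comparable to the $(d-1)/d$-power of the excised volume, which for $R$ large exceeds the cost of reinserting that volume elsewhere as small balls; hence one may assume every $\vec E^n$ is contained in a fixed ball. Then $BV$-compactness together with a diagonal extraction yields $1_{E^n_k}\to 1_{E_k}$ in $L^1$ for each $k$, the limit regions are essentially disjoint with the prescribed volumes, and lower semicontinuity of the perimeter gives $P(\vec E)\le\liminf_n P(\vec E^n)$, so $\vec E$ is minimal.

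\emph{Regularity for $d=2$.} Here I would first record that a minimal cluster is a $(\Lambda,r_0)$-minimizer of the perimeter, so the Euler--Lagrange analysis and the regularity theory for almost-minimizers apply. The first variation along volume-preserving vector fields produces Lagrange multipliers $\lambda_k$ for the regions and shows that on each two-region interface $\partial E_k\cap\partial E_j$ the scalar curvature equals $\lambda_k-\lambda_j$; in the plane this forces every interface to be a circular arc or a line segment. Almost-minimality furnishes the monotonicity formula and the density estimates, whence every blow-up at a point of $\partial\vec E$ is a perimeter-minimizing cone in $\RR^2$; such cones are classified, being either a line (regular point) or three half-lines meeting at equal $120^\circ$ angles (triple point). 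Therefore the non-regular points of $\partial\vec E$ are isolated, and, $\vec E$ being contained in a ball, finitely many; so $\partial\vec E$ is a finite union of circular arcs and segments meeting three at a time at finitely many vertices. Connectedness of $\partial\vec E$ I would obtain by a comparison argument: if it were disconnected the cluster would decompose into parts lying at positive distance, and translating one part into contact with another and performing a small surgery at the contact point would strictly lower the total interface length; being then a connected finite union of arcs, $\partial\vec E$ is pathwise connected.

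\emph{The identity and the main obstacle.} The equality $P(\vec E)=\H^1(\partial\vec E)$ I would read off from Proposition~\ref{prop:regularity}, which gives $P(\vec E)=\H^1(\partial^*\vec E)$: on the relative interior of each interfacial arc the outer normals of the two adjacent regions are defined, so that relative interior lies in $\partial^*\vec E$, and hence $\partial\vec E\setminus\partial^*\vec E$ is contained in the finite vertex set, which is $\H^1$-negligible. The genuinely hard part is the regularity statement --- verifying almost-minimality, establishing the monotonicity formula and density bounds, and classifying the planar minimizing cones so as to conclude that the singular set is discrete; this is precisely the content imported from \cite{Morgan87,Maggi12-GMTbook}, whereas the existence argument and the final identity are comparatively routine.
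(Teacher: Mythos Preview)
The paper does not actually prove this theorem: it is stated there without proof, as a collection of known results imported from \cite{Morgan87,Maggi12-GMTbook}. So there is no ``paper's own proof'' to compare against; your sketch is essentially an outline of the standard arguments one finds in those references, and you correctly flag that the regularity step is the substantive content being quoted.

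One small correction on the existence step: intersecting the cluster with $B_R$ does not, by itself, \emph{save} perimeter --- it may add perimeter along $\partial B_R$. The actual confinement argument (as in Maggi's Chapter~29) uses the coarea formula to locate radii $R$ for which this added boundary is small, combined with volume--fixing variations (or a nucleation lemma controlling mass escaping to infinity) so that the net cost of truncation plus reinsertion is strictly negative unless the mass at infinity vanishes. Your one--line summary misstates the mechanism, though the conclusion you draw from it is the right one.
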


The statement below gives isodiametric inequality for planar finite clusters,

\begin{proposition}[diameter estimate]\label{prop:bounded}
If $\vec E$ is an $N$-cluster in $\RR^2$ and $\partial \vec E$ 
is pathwise connected, then
\[
  \diam \partial \vec E \le P(\vec E).
\]
\end{proposition}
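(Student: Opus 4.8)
The plan is to exploit the pathwise connectedness of $\partial\vec E$ together with the fact that, by Theorem~\ref{th:minimal}, for a minimal cluster $\partial\vec E$ consists of finitely many circular arcs and segments meeting in triples; of course Proposition~\ref{prop:bounded} is stated for an arbitrary $N$-cluster with $\partial\vec E$ pathwise connected, so first I would reduce to this nicer situation. If $\vec E$ is an arbitrary $N$-cluster with $\partial\vec E$ pathwise connected, then either $P(\vec E)=+\infty$, in which case the inequality is trivial, or $P(\vec E)<+\infty$; in the latter case I would pass to the minimal $N$-cluster $\vec F$ with the same volumes, note $P(\vec F)\le P(\vec E)$, and — here is the one delicate point — argue that it suffices to bound $\diam\partial\vec F$. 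This last reduction is not completely automatic, because replacing $\vec E$ by a minimizer changes the boundary; so more honestly I would prove the estimate directly for any $N$-cluster with $\partial\vec E$ pathwise connected and $\H^1(\partial\vec E)<\infty$, using only that $\partial\vec E$ is a continuum of finite length.

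The core geometric estimate is this: if $\Gamma\subset\RR^2$ is a pathwise connected set (a continuum) with $\H^1(\Gamma)<\infty$, then $\diam\Gamma\le \tfrac12\H^1(\Gamma)$ — in fact the factor is even better than what is claimed. I would prove it as follows. Pick two points $x,y\in\overline{\Gamma}=\partial\vec E$ with $|x-y|$ arbitrarily close to $\diam\partial\vec E$ (the boundary is closed, so we may take $x,y\in\partial\vec E$ realizing the diameter). By pathwise connectedness there is a continuous curve $\gamma\colon[0,1]\to\partial\vec E$ with $\gamma(0)=x$, $\gamma(1)=y$. Since $\H^1(\partial\vec E)<\infty$ one may extract from $\gamma$ an injective (simple) arc with the same endpoints whose length is at most $\H^1(\partial\vec E)$ — for a continuum of finite length this is classical (existence of a shortest, hence rectifiable, simple arc joining two points inside a Peano continuum of finite $\H^1$-measure; alternatively use that the geodesic distance in $\partial\vec E$ is bounded by $\H^1(\partial\vec E)$). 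Call this simple arc $\sigma$, with $\ell\defeq\H^1(\sigma)\le\H^1(\partial\vec E)=P(\vec E)$, the last equality by Theorem~\ref{th:minimal}.

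Now I would run the standard ``projection'' argument on the rectifiable simple arc $\sigma$ from $x$ to $y$. Parametrize $\sigma$ by arclength, $\sigma\colon[0,\ell]\to\RR^2$, $\sigma(0)=x$, $\sigma(\ell)=y$, and let $e\defeq (y-x)/|y-x|$. The scalar function $t\mapsto \langle\sigma(t)-x,e\rangle$ is $1$-Lipschitz, starts at $0$ and ends at $|x-y|$, so by the fundamental theorem of calculus $|x-y|=\int_0^\ell \langle\sigma'(t),e\rangle\,dt\le \int_0^\ell|\sigma'(t)|\,dt=\ell$. Hence $\diam\partial\vec E = |x-y|\le \ell \le P(\vec E)$, which is exactly the claimed bound (the stronger factor $\tfrac12$ would follow by going around the ``other way'' along $\sigma$, but it is not needed here). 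Taking the supremum over pairs $x,y$ realizing the diameter finishes the proof.

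The main obstacle is the reduction step: extracting from an arbitrary pathwise-connected boundary a \emph{rectifiable} simple arc whose length is controlled by $\H^1(\partial\vec E)$. If one is willing to use Theorem~\ref{th:minimal} one sidesteps this entirely, since there $\partial\vec E$ is a finite union of smooth arcs meeting in triples, so it is a finite metric graph and the geodesic distance between any two of its points is obviously at most its total length; the projection argument then applies verbatim to the geodesic. I expect the paper takes this route — prove it for minimal clusters via Theorem~\ref{th:minimal}, and deduce the general statement by comparison with a minimizer — so I would structure the write-up that way, isolating the elementary ``length controls diameter for a connected graph'' lemma and invoking the regularity theorem to put ourselves in its hypotheses.
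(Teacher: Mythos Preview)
Your core argument is exactly the paper's: for any $x,y\in\partial\vec E$ one has $|x-y|\le\H^1(\partial\vec E)=P(\vec E)$, the inequality from pathwise connectedness and the equality from Theorem~\ref{th:minimal}. The paper's proof is literally that one sentence; all your scaffolding (the attempted reduction to a minimizer, extraction of a simple rectifiable arc, the projection computation) is unnecessary elaboration of the elementary fact that two points in a pathwise connected set lie at Euclidean distance at most its $\H^1$-measure.

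One correction: the claim $\diam\Gamma\le\tfrac12\H^1(\Gamma)$ for a continuum is false---a line segment gives equality with constant $1$---and there is no ``other way'' around a simple arc $\sigma$; fortunately you never use this. Your concern that the equality $\H^1(\partial\vec E)=P(\vec E)$ is only justified for minimal clusters is legitimate and is in fact shared by the paper's proof, which writes the equality without comment; the proposition is only ever applied to minimal $N$-clusters $\vec F^n$, where Theorem~\ref{th:minimal} supplies it.
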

\begin{proof}
Since $\partial \vec E$ is pathwise connected, given any two points $x,y\in \partial \vec E$ 
we find that $\abs{x-y} \le \H^1(\partial \vec E) = P(\vec E)$.
\end{proof}
  
  Another ingredient will be the following statement on cluster truncation,
  
\begin{proposition}[cluster truncation]\label{prop:trunk} 
  Let $\vec E = (E_1,\dots, E_k,\dots)$ be a (finite or infinite) cluster
  and let $T_N \vec E$ be the $N$-cluster $(E_1, \dots, E_N)$.
  Then
  \[
  P(T_N \vec E) \le P(\vec E).
  \]
\end{proposition}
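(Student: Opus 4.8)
The plan is to observe that the truncated cluster $T_N\vec E$ differs from $\vec E$ only in its external region. Indeed, while the external region of $\vec E$ is $E_0 = \RR^d \setminus \bigcup_{k\ge 1} E_k$, that of $T_N\vec E$ is $\tilde E_0 \defeq \RR^d \setminus \bigcup_{k=1}^N E_k$, which coincides up to a negligible set with $E_0 \cup \bigcup_{k>N} E_k$ because the regions are essentially pairwise disjoint. By the definition~\eqref{eq:def_P} of the cluster perimeter it therefore suffices to establish the single inequality
\[
  P(\tilde E_0) \le P(E_0) + \sum_{k>N} P(E_k),
\]
since this gives
\[
  2P(T_N\vec E) = P(\tilde E_0) + \sum_{k=1}^N P(E_k) \le P(E_0) + \sum_{k=1}^{\infty} P(E_k) = 2P(\vec E).
\]
If $P(\vec E) = +\infty$ there is nothing to prove, so one may assume $\sum_{k\ge 1} P(E_k) < \infty$, which in particular makes the right-hand side above finite.

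Next I would prove the displayed inequality by a routine approximation. Since $1_{\tilde E_0} = 1_{E_0} + \sum_{k>N} 1_{E_k}$ almost everywhere, set $F_M \defeq E_0 \cup \bigcup_{N<k\le M} E_k$. The sets $F_M$ increase to $\tilde E_0$, so $1_{F_M} \to 1_{\tilde E_0}$ in $L^1_{\loc}(\RR^d)$. By finite subadditivity of the Caccioppoli perimeter under unions (a consequence of the submodularity inequality $P(A\cup B) + P(A\cap B) \le P(A) + P(B)$), one has $P(F_M) \le P(E_0) + \sum_{N<k\le M} P(E_k)$ for every $M$. Lower semicontinuity of the perimeter with respect to $L^1_{\loc}$-convergence then yields $P(\tilde E_0) \le \liminf_{M\to\infty} P(F_M) \le P(E_0) + \sum_{k>N} P(E_k)$, as required.

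The argument is elementary and I do not anticipate any genuinely hard step; the only point deserving a little care is that countable subadditivity of the perimeter is not automatic from the finite one and is recovered precisely through the lower semicontinuity step, using the a priori bound $\sum_{k\ge 1} P(E_k)<\infty$ to control the tail. An alternative to the approximation argument would be to write $D 1_{\tilde E_0} = D1_{E_0} + \sum_{k>N} D1_{E_k}$ as a convergent series of vector measures (legitimate since $\sum_k |D1_{E_k}|$ is then a finite measure) and apply the triangle inequality for total variations, but the semicontinuity route seems cleanest.
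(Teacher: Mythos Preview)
Your proof is correct and follows essentially the same route as the paper: both reduce the claim to bounding the perimeter of the truncated external region by $P(E_0)+\sum_{k>N}P(E_k)$ via (countable) subadditivity of the Caccioppoli perimeter. The paper takes the complementary viewpoint, writing $P(\tilde E_0)=P\big(\bigcup_{k\le N}E_k\big)$ and first applying the submodularity consequence $P(E)\le P(E\cup F)+P(F)$ before invoking subadditivity on the tail union; your identification $\tilde E_0=E_0\cup\bigcup_{k>N}E_k$ skips that intermediate step, and you spell out the passage from finite to countable subadditivity through $L^1_{\loc}$ lower semicontinuity, which the paper leaves implicit.
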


\begin{proof}
  For measurable sets $E,F$ the inequality
\[    P(E\cup F) + P(E\cap F) \le P(E) + P(F)
\] 
holds, 
  hence if $\vert E\cap F\vert=0$, one has
  \[
    P(E) 
    = P((E\cup F)\cap (\RR^d\setminus F)) 
    \le P(E\cup F) + P(F).
  \]
  It follows that
  \begin{align*}
    2 P(T_N\vec E) 
    &= \sum_{k=1}^n P(E_k) + P\enclose{\bigcup_{k=1}^n E_k} \\
    &\le \sum_{k=1}^n P(E_k) + P\enclose{\bigcup_{k=1}^\infty E_k} + P\enclose{\bigcup_{k=n+1}^\infty E_k}\\
    &\le  \sum_{k=1}^n P(E_k) + P\enclose{\bigcup_{k=1}^\infty E_k} + \sum_{k=n+1}^\infty P(E_k)\\
    &= \sum_{k=1}^\infty P(E_k) + P\enclose{\bigcup_{k=1}^\infty E_k} = 2P(\vec E)
  \end{align*}
  as claimed.
\end{proof}

\begin{lemma}\label{lm:boundary}
  Let $E$ be a measurable set and $\Omega$ an open connected set.
  If $\partial E\cap \Omega = \emptyset$, then either
  $\abs{\Omega \cap E} = 0$ or $\abs{\Omega\setminus E}=0$.
\end{lemma}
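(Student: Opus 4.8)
The plan is to exploit the purely local nature of the (measure-theoretic) boundary together with the connectedness of $\Omega$. Since $\partial E\cap\Omega=\emptyset$, every point $x\in\Omega$ violates the defining condition of $\partial E$, so there is a radius $\rho=\rho(x)>0$ with $B_\rho(x)\subseteq\Omega$ and either $\abs{E\cap B_\rho(x)}=0$ or $\abs{B_\rho(x)\setminus E}=0$ (shrinking $\rho$ preserves either alternative, which is how we arrange $B_\rho(x)\subseteq\Omega$). This dichotomy at each point suggests splitting $\Omega$ accordingly.

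Concretely, I would introduce the two sets
\[
A \defeq \ENCLOSE{x\in\Omega\colon \abs{E\cap B_\rho(x)}=0 \text{ for some } \rho>0}, \qquad
B \defeq \ENCLOSE{x\in\Omega\colon \abs{B_\rho(x)\setminus E}=0 \text{ for some } \rho>0},
\]
and establish three facts. First, $A$ and $B$ are open: if $x\in A$ with witness radius $\rho$, then every $y\in B_\rho(x)$ lies in $A$ with witness radius $\rho-\abs{x-y}$, and the same argument applies to $B$. Second, $A\cup B=\Omega$, which is exactly the pointwise dichotomy recorded above. Third, $A\cap B=\emptyset$: a point in both would admit a single ball $B_\rho(x)\subseteq\Omega$ (take the minimum of the two witness radii) with $\abs{E\cap B_\rho(x)}=0=\abs{B_\rho(x)\setminus E}$, forcing $\abs{B_\rho(x)}=0$, which is impossible.

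Since $\Omega$ is connected and is the disjoint union of the open sets $A$ and $B$, one of them is empty. If $B=\emptyset$, then $\Omega=A$ is covered by the balls $B_{\rho(x)}(x)$, $x\in\Omega$, each meeting $E$ in a Lebesgue-null set; extracting a countable subcover (possible since $\RR^d$ is second countable) and summing gives $\abs{\Omega\cap E}=0$. Symmetrically, if $A=\emptyset$ then $\abs{\Omega\setminus E}=0$. I do not anticipate any genuine obstacle: the only points requiring a little care are verifying the openness of $A$ and $B$ and passing from the local null-set statements to a global one via a countable subcover, both of which are routine.
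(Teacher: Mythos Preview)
Your proof is correct and follows exactly the paper's approach: the paper introduces the same two sets (called $A_0$ and $A_1$), notes they are open, disjoint, and cover $\Omega$, and concludes by connectedness. You simply spell out more of the routine details (openness, disjointness, and the countable-subcover passage from local to global nullity) that the paper leaves implicit.
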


\begin{proof}
Notice that $\displaystyle 
  \Omega\setminus \partial E = A_0 \cup A_1$, 
where 
\begin{align*}
  A_0 &\defeq \ENCLOSE{x\in \Omega \colon \abs{B_\rho(x)\cap E}=0\quad\mbox{for some $\rho>0$}},\\  
  A_1 &\defeq \ENCLOSE{x\in \Omega \colon \abs{B_\rho(x)\setminus E}=0\quad\mbox{for some $\rho>0$}}.
\end{align*}
It is clear that $A_0$ and $A_1$ are open disjoint sets, and 
if  $\partial E\cap \Omega = \emptyset$ their union is the whole set $\Omega$.
If $\Omega$ is connected, then either $A_0$ or $A_1$ is equal to $\Omega$ 
which means that either $\abs{\Omega\cap E}=0$ or $\abs{\Omega\setminus E}=0$.
\end{proof}

%
%

\section{Main result}

The statement below provides existence of infinite planar isoperimetric clusters.

\begin{theorem}[existence]%
\label{th:main}%
Let $\vec a = (a_1, \dots, a_k, \dots)$ be a sequence 
of nonnegative numbers such that $\sum_{k=1}^\infty \sqrt{a_k}<\infty$. 
Then there exists a minimal cluster $\vec E$ in $\RR^2$, 
with $\vec m(\vec E)=\vec a$ satisfying additionally
\begin{gather}\label{eq:existence1}
  \bigcup_{k=1}^{\infty} E_k \text{ is bounded,}\\
\label{eq:connected}
 \partial \vec E \text{ is pathwise connected,}\\
 \label{eq:existence2}
 \H^1(\partial \vec E \setminus \partial^*\vec E) = 0.
\end{gather} 
\end{theorem}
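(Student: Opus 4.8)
The plan is to obtain $\vec E$ as a limit of finite minimal clusters and to verify that the limit inherits minimality together with the three structural properties. First I would truncate: for each $N$, Theorem~\ref{th:minimal} gives a minimal $N$-cluster $\vec E^N = (E_1^N,\dots,E_N^N)$ with $|E_k^N| = a_k$ for $k\le N$, and by that same theorem $\partial\vec E^N$ is pathwise connected with $P(\vec E^N) = \H^1(\partial\vec E^N)$. The crucial a priori bound is $P(\vec E^N)\le C\sum_{k=1}^\infty\sqrt{a_k} =: C_0 < \infty$, obtained by comparing $\vec E^N$ with an explicit competitor built from disjoint disks of areas $a_k$ (so its perimeter is $\sum 2\sqrt{\pi a_k}$, up to translating the disks far apart to keep them disjoint); together with Proposition~\ref{prop:trunk} this gives $P(\vec E^N)$ monotone and uniformly bounded. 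By Proposition~\ref{prop:bounded}, $\diam\partial\vec E^N\le P(\vec E^N)\le C_0$, so after translating we may assume all $\partial\vec E^N$ lie in a fixed ball $B_R$; this already forces $\bigcup_k E_k^N\subset B_R$, hence the candidate limit cluster will satisfy~\eqref{eq:existence1}.

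Next I would extract convergence. The sets $K_N := \partial\vec E^N$ are closed connected subsets of $B_R$ with $\H^1(K_N)$ bounded, so by Blaschke's selection theorem (compactness for the Hausdorff distance) a subsequence converges in Hausdorff distance to a compact connected set $K$, and by Go\l\c ab's theorem $\H^1(K)\le\liminf_N\H^1(K_N)$. Simultaneously, each characteristic function $1_{E_k^N}$ ($N\ge k$) lies in $BV(B_R)$ with $|D1_{E_k^N}|(B_R)=P(E_k^N)\le 2P(\vec E^N)\le 2C_0$, so by $BV$-compactness and a diagonal argument over $k$ we get $1_{E_k^N}\to 1_{E_k}$ in $L^1$ for sets $E_k$ with $|E_k|=a_k$ and pairwise disjoint (the disjointness passes to the limit since $|E_i^N\cap E_j^N|=0$). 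I would also need the two limits to be compatible, i.e. $\partial\vec E\subseteq K$: since outside any neighbourhood of $K$ the sets $E_k^N$ are, for large $N$, either essentially full or essentially empty on each connected component (Lemma~\ref{lm:boundary} applied on $B_R\setminus K_N$, refined by the Hausdorff convergence), the same holds in the limit, so $\partial\vec E\subseteq K$ and $\H^1(K\setminus\partial\vec E)$ will turn out to vanish.

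For minimality: given any competitor $\vec F$ with $\vec m(\vec F)=\vec a$ and $P(\vec F)<\infty$, Proposition~\ref{prop:trunk} gives $P(T_N\vec F)\le P(\vec F)$, but $T_N\vec F$ has the wrong volumes in a tail; I would fix this by adding to $T_N\vec F$ a far-away disk of area $\sum_{k>N}a_k$ (or $N$-many disks), whose extra perimeter is controlled by $C\sqrt{\sum_{k>N}a_k}\to 0$, producing $N$-cluster competitors $\vec G^N$ with $\vec m(\vec G^N)=\vec m(\vec E^N)$ and $\limsup_N P(\vec G^N)\le P(\vec F)$. Minimality of $\vec E^N$ then gives $P(\vec E^N)\le P(\vec G^N)$, and passing to the limit, using lower semicontinuity of the cluster perimeter under $L^1$ convergence of the regions (which follows from lower semicontinuity of each $P(E_k)$ plus Fatou on the sum, or directly from the Go\l\c ab bound since $P(\vec E)=\H^1(\partial^*\vec E)\le\H^1(K)\le\liminf P(\vec E^N)$), yields $P(\vec E)\le P(\vec F)$. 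This proves $\vec E$ is minimal, and then~\eqref{eq:existence2} follows from $\H^1(\partial^*\vec E)=P(\vec E)=\liminf\H^1(K_N)\ge\H^1(K)\ge\H^1(\partial\vec E)\ge\H^1(\partial^*\vec E)$, forcing equality and hence $\H^1(\partial\vec E\setminus\partial^*\vec E)=0$; property~\eqref{eq:connected} is the connectedness of $\partial\vec E$, which I would get from $\partial\vec E\supseteq\overline{\bigcup_k\partial E_k}$ combined with $\partial\vec E\subseteq K$ and $K$ connected — the point being that $K\setminus\partial\vec E$ is $\H^1$-null and cannot disconnect $K$ (a compact connected set cannot be disconnected by removing an $\H^1$-null set, since such a set has empty interior and cannot separate). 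The main obstacle I anticipate is precisely this last interaction between the two modes of convergence: ensuring that the Hausdorff limit $K$ of the boundaries coincides $\H^1$-almost everywhere with $\partial\vec E$ defined from the $L^1$-limits $E_k$, and transferring pathwise connectedness from the $K_N$ to $\partial\vec E$ — this is where the planar-specific tools (isodiametric inequality and Go\l\c ab) do the real work, and the argument must rule out "vanishing" of boundary into the limit without a corresponding region, as well as spurious creation of boundary.
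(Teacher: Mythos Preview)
Your approach is essentially the paper's: minimal $N$-clusters as approximants, the uniform perimeter bound from a disjoint-disk competitor, the diameter bound to confine everything, diagonal $L^1$ extraction of the regions together with Hausdorff extraction of the boundaries, Go\l\c ab semicontinuity, and the chain $P(\vec E)=\H^1(\partial^*\vec E)\le\H^1(\partial\vec E)\le\H^1(K)\le\liminf P(\vec E^N)\le p$ forcing all equalities. Two points need tightening.

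First, the minimality step takes an unnecessary detour. When you truncate a competitor $\vec F$ to $T_N\vec F$ there is \emph{no} volume defect to repair: $T_N\vec F$ already satisfies $|F_k|=a_k$ for $k\le N$ and is a legitimate competitor for the $N$-cluster problem, so Proposition~\ref{prop:trunk} gives $p_N\le P(T_N\vec F)\le P(\vec F)$ directly; hence $p_N\le p$ and $P(\vec E)\le\liminf p_N\le p$. The far-away disks are harmless but superfluous. (One thing you do need and did not write down is $L^1$ convergence of $\bigcup_k E_k^N$ to $\bigcup_k E_k$, so that lower semicontinuity also applies to the exterior region; the paper handles this with a short tail estimate using $\sum_{k>N}a_k\to 0$.)

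Second, and more substantively, your justification for~\eqref{eq:connected} is incorrect as stated. The claim ``a compact connected set cannot be disconnected by removing an $\H^1$-null set'' is false: remove a single point from a segment. What actually works --- and what the paper does --- is to show the stronger fact $K\setminus\partial\vec E=\emptyset$. Since $K$ is compact, connected, and has $\H^1(K)<\infty$, it is arcwise connected by rectifiable injective arcs. If some $x\in K\setminus\partial\vec E$ existed, then because $\partial\vec E$ is closed in $K$, an initial piece $\gamma([0,\eps])$ of any injective arc in $K$ issuing from $x$ would lie in $K\setminus\partial\vec E$, forcing $\H^1(K\setminus\partial\vec E)>0$ and contradicting the equality $\H^1(K)=\H^1(\partial\vec E)$ already established. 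Hence $\partial\vec E=K$, which is pathwise connected.
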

\begin{remark}
In view of~\eqref{eq:existence2} and Proposition~\ref{prop:regularity},
for the minimal cluster provided by the above
Theorem~\ref{th:main},
one has 
\begin{equation}\label{eq:PH1}
  P(\vec E) = \H^1(\partial \vec E) = \H^1(\partial^* \vec E).
\end{equation}
Of course there exists a set with finite perimeter $E$ such that 
$P(E)<\H^1(\partial E)$ hence~\eqref{eq:PH1} is false for general, 
non minimal, clusters.

It is interesting to note that,
as shown in example~\ref{ex:circles},
there exists a finite cluster $\vec E$ satisfying~\eqref{eq:PH1},
for which one does not have $P(E_k)=\H^1(\partial E_k)$ 
for all $k$.
It would be interesting to see whether 
these equalities hold for minimal clusters.
\end{remark}

\begin{proof} Let $\displaystyle
    \bar p \defeq 2\sqrt{\pi}\sum_{k=1}^{\infty} \sqrt{a_k} < +\infty$, 
 and
\begin{align*}
  p &\defeq \inf \{P(\vec E)\colon \text{$\vec E$ cluster in $\RR^2$ with $\lvert E_k\rvert=a_k$, $k=1,2,\dots, n, \dots$}\},\\
  p_n &\defeq \inf \{P(\vec E)\colon \text{$\vec E$ $n$-cluster in $\RR^2$ with $\lvert E_k\rvert=a_k$, $k=1,\dots, n$}\}
\end{align*}
so that a cluster $\vec E$ with measures $\vec m(\vec E) = \vec a$ 
is minimal if and only if $P(\vec E) = p$,
while an $n$-cluster $\vec E$ with measures $\abs{E_k}=a_k$ for $k=1,\dots,n$ 
is minimal if and only if $P(\vec E) = p_n$.

If $\vec E$ is a competitor for $p$, then $T_n \vec E$ is a competitor for $p_n$
and, by Proposition~\ref{prop:trunk}, one has $P(T_n \vec E) \le P(\vec E)$. 
Hence $p_n \le p$.
Moreover one can build a competitor for $p$ which is composed 
by circular disjoint regions $(B_1,\dots, B_j,\dots )$,
where $B_j$ are disjoint balls of radii $\sqrt{\frac{a_j}{\pi}}$,
to find that $p \le \bar p < +\infty$.

For each $n\ge 1$ consider a minimal $n$-cluster $\vec F^n$
with $\lvert F^n_k\rvert = a_k$ for $k\le n$, $F^n_k\defeq\emptyset$ for $k>n$
so that $P(\vec F^n) = p_n$.
Hence, by Proposition~\ref{prop:bounded}, 
up to translations we might and shall suppose that all 
the regions $F^n_k$ of all the  
clusters $\vec F^n$ are contained in a ball of radius $\bar p$.
In fact:
\[
  \bar p
  \ge p 
  \ge \sup_n p_n 
  = \sup_n P(\vec F^n) 
  \ge \sup_n \diam \partial \vec F^n.
\]

Up to a subsequence we can hence assume that the first regions $F^n_1$
converge to a set $E_1$ in the sense that their characteristic
functions $\mathbf 1_{F_1^n}$ converge to the characteristic 
function $\mathbf 1_{E_1}$ in the Lebesgue space $L^1(\RR^2)$
(we call this convergence $L^1$ convergence of sets).
Analogously, up to a sub-subsequence 
also the second regions $F^n_2$ 
converge in $L^1$ sense to a set $E_2$, 
and in this way we define inductively
the sets $E_k$ for all $k\ge 1$.
Then there exists a diagonal subsequence with indices $n_j$ such that for all $k$
one has $F^{n_j}_k \to E_k$ in $L^1$ for all $k\ge 1$ as $j\to +\infty$.

Consider the cluster $\vec E$ with the components $E_k$ 
defined above.
By continuity we have $\vec m(\vec E) = \vec a$ because 
$F^{n_j}_k \to E_k$ in $L^1$ as $j\to \infty$ and
$\abs{F^{n_j}_k} = a_k$ for all $j$.  
We claim that the union of all the regions of $\vec F^{n_j}$ also converges to the 
union of all the regions of $\vec E$.
For all $\eps>0$ take $N$ such that 
$\sum_{k=N+1}^\infty a_k \le \eps$
and notice that 
\[  
\enclose{\bigcup_{k=1}^{\infty} E_k} \triangle \enclose{\bigcup_{k=1}^{\infty} F_k^{n_j}}
\subseteq \bigcup_{k=1}^N \enclose{E_k \triangle F_k^{n_j}}
\cup \bigcup_{k=N+1}^{\infty} E_k \cup \bigcup_{k=N+1}^{\infty} F_k^{n_j}.
\]
Hence 
\[
  \limsup_j \abs{\bigcup_{k=1}^{\infty} E_k \triangle \bigcup_{k=1}^{\infty} F_k^{n_j}}
  \le \lim_j \sum_{k=1}^N \abs{E_k \triangle F_k^{n_j}} + 2 \eps = 2\eps.
\]
Letting $\eps \to 0$ we obtain the claim.

By lower semicontinuity of perimeter:
\[  
  P(E_k)  \le \liminf_{j\to +\infty} P(F^{n_j}_k)
  \qquad\text{and}\qquad
  P\left(\bigcup_{k=1}^{+\infty} E_k\right) 
  \le \liminf_{j\to+\infty} P\left(\bigcup_{k=1}^{+\infty} F^{n_j}_k\right)
\]
and hence $P(\vec E) \le \liminf_j P(\vec F^{n_j}) \le p$ proving 
that $\vec E$ is actually a minimal cluster. 
Since all the regions $F_k^n$ are equi-bounded we obtain~\eqref{eq:existence1}.

We are going to prove~\eqref{eq:existence2}.
By Theorem~\ref{th:minimal} the minimal $n$-cluster $\vec F^n$
has a measure theoretic boundary $\partial \vec F^n$ which is a compact 
and connected set such that $P(\vec F^n) = \H^1(\partial \vec F^n)$.
Up to a subsequence, the compact sets $\partial \vec F^{n_j}$, 
being uniformly bounded, converge with respect to the Hausdorff distance,
to a compact set $K$. 
Without loss of generality suppose $n_j$ is labeling this new subsequence.

We claim that $\partial \vec E \subseteq K$.
In fact for any given $x\in \partial \vec E$
and any $\rho>0$ there exists $k=k(\rho)$ such that 
$B_\rho(x)\cap E_k$ and $B_\rho(x)\setminus E_k$ 
both have positive measure. 
Since $\abs{B_\rho(x)\cap F^{n_j}_k}\to \abs{B_\rho(x)\cap E_k} >0$
and $\abs{B_\rho(x)\setminus F^{n_j}_k} \to \abs{B_\rho(x)\setminus E_k} >0$
for $j=j(\rho)$ sufficiently large 
by Lemma~\ref{lm:boundary}
there is a point $x_k^j \in B_\rho(x)\cap \partial F^{n_j}_k$.
As $\rho\to 0$ the sequence $x_k^{j}$ converges to $x$ and 
since $\partial F_k^{n_j}\subseteq \partial \vec F^{n_j}$ we conclude
that $x\in K$.

The sets $\partial \vec F^n$ are connected, hence,
by the classical Go\l\c ab theorem on semicontinuity of one-dimensional Hausdorff measure over sequences of connected sets 
(see~\cite[theorem 4.4.17]{AmbTil04} or~\cite[theorem 3.3]{PaoSte13} for its most general statement and a complete proof),
one has
\[
  \H^1(K)
  \le \liminf_n \H^1(\partial \vec F^n)
\]
and $K$ is itself connected.
Summing up and using Proposition~\ref{prop:regularity}
\begin{equation}\label{eq:5867}
  \begin{aligned}
P(\vec E) 
&= \H^1(\partial^* \vec E)  
\le \H^1(\partial \vec E) 
\le \H^1(K) \\
&\le \liminf_n P(\vec F^n)
\le \limsup_n p_n \le p\le P(\vec E)
  \end{aligned}
\end{equation}
hence $\H^1(\partial^* \vec E) = \H^1(\partial \vec E) = \H^1(K)$, $p_n\to p$ 
and \eqref{eq:existence2} follows.

Finally, to prove that $\partial \vec E$ is connected, it is 
enough to show $\partial \vec E = K$.
We already know that $\partial \vec E\subseteq K$ so 
we suppose by contradiction that there exists $x\in K\setminus \partial \vec E$.
Take any $y\in K$. 
The set $K$ is arcwise connected by rectifiable arcs, 
since it is a compact connected set of finite one-dimensional Hausdorff measure
(see e.g.~\cite[lemma 3.11]{Fal86} or \cite[theorem 4.4.7]{AmbTil04}), 
in other words,
there exists an injective continuous curve 
$\gamma\colon[0,1]\to K$ with $\gamma(0) = x$ and $\gamma(1)=y$. 
Since $\partial \vec E$ is closed in $K$
there is a small $\eps>0$ such that 
$\gamma([0,\eps])\subset K\setminus \partial \vec E$ 
and hence $\H^1(K\setminus \partial \vec E)>0$
contrary to $\H^1(K) = \H^1(\partial \vec E)$; this contractictions shows the last claim and hence concludes the proof.
\end{proof}


\section{Some examples}

We collect here some interesting examples 
of infinite planar clusters.

\begin{example}[Apollonian packing] 
\label{ex:apollonio}%
A cluster $\vec E$, as depicted in Figure~\ref{fig:apollonio},
can be constructed so that each region $E_k = B_{r_k}(x_k)$, $k\neq 0$, 
is a ball contained in the ball $B_1 = \RR^2\setminus E_0$. 
The balls can be choosen to be pairwise disjoint and such that
the measure of $B_1 \setminus \bigcup_{k=1}^{\infty} E_k = 0$
(see~\cite{Kasner378}). 

Clearly such a cluster must be minimal because each region $E_k$ 
has the minimal possible perimeter among sets with the given area
and the same is true for the complement of the exterior region 
$E_0$ which is their union.
The boundary $\partial \vec E$ of such a cluster 
is the \emph{residual set}, 
i.e.\ the set of zero measure which remains when the balls $E_k$ 
are removed from the large ball $\overline{B_1}$:
\begin{equation}
  \partial \vec E = \bigcup_{k=0}^{+\infty} \partial B_{r_k}(x_k)
   = \overline B_1 \setminus \bigcup_{k=1}^{+\infty} B_{r_k}(x_k).
\end{equation}
Unfortunately the residual set of such a cluster
has Hausdorff dimension $d>1$ (see~\cite{Hirst67}) 
and hence the cluster $\vec E$ cannot have finite perimeter.


However we can consider
the fractional (non local) perimeter $P_s$ defined by 
\[
  P_s(E) = \int_E \int_{\RR^2\setminus E} \frac 1 {\abs{x-y}^{2+s}} \, dx\, dy  
\]
to define the corresponding non local perimeter
$P_s(\vec E)$ of the cluster $\vec E$ by means of definition~\eqref{eq:def_P}
with $P_s$ in place of $P$.
If $r_k$ is the radius of the $k$-th disk of the cluster 
it turns out (see \cite{Boyd82}) that the infimum of all $\alpha$, such that 
the series $\sum_k r_k^\alpha$ converges,
is equal to $d$, the Hausdorff dimension of $\partial \vec E$.
Since $d \in (1,2)$ for all $s < 2-d$ we have  
\[
  \sum_k r_k^{2-s} < +\infty  
\]
and since $P_s(B_r) = C\cdot r^{2-s}$ (with $0<C<+\infty$)
we obtain $P_s(\vec E)<+\infty$ for such $s$.
It is well known (see \cite{FrankSeiringer10}) that the solution 
to the fractional isoperimetric problem is given by balls, 
hence $\vec E$ provides an example of an infinite minimal cluster 
with respect to the fractional perimeter $P_s$.
\end{example}

\begin{example}[Anisotropic isoperimetric packing] 
\label{ex:squares}%
	We can find 
a similar example if we consider 
an anisotropic perimeter such that the isoperimetric problem has 
the square (instead of the circle) as a solution.
If $\phi$ is any norm on $\RR^2$ one can define the perimeter $P_\phi$ 
which is the relaxation of the following functional defined 
on regular sets $E\subset \RR^2$:
\[
  P_\phi(E) = \int_{\partial E} \phi(\nu_E(x))\, d \H^{1}(x)
\]
where $\nu_E(x)$ is the exterior unit normal vector to $\partial E$ 
in $x$. 
If $\phi(x,y) = \abs{x} + \abs{y}$ (the \emph{Manhattan} norm) it is well known that the $P_\phi$-minimal 
set with prescribed area (i.e.\ the \emph{Wulff shape}) 
is a square with sides parallel to the coordinated axes (which is the ball for the dual norm).
It is then easy to construct an infinite cluster $\vec E=(E_1,\dots, E_k\dots)$ 
where each $E_k$ is a square and also the union of all such squares is a square, 
see figure~\ref{fig:apollonio}.
By iterating such a construction it is not difficult to realize that 
given any sequence $a_k$, $k=1,\dots, n,\dots$ of numbers such that 
their sum is equal to $1$ and each number is a power of $\frac 1 4$ it is 
possible to find a cluster $\vec E$ with $\vec m(\vec E) = \vec a$ such 
that each $E_k$ is a square and the union $\bigcup_k E_k$ is the unit square.
\end{example}

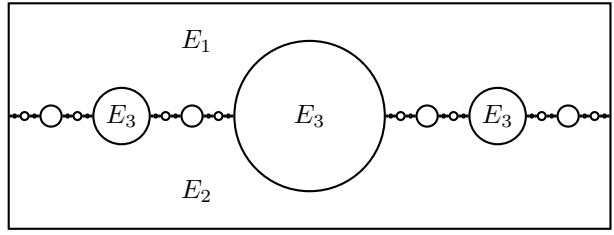
\begin{figure}
  \begin{center}
    \begin{tikzpicture}[thick]
    \node at (2.5,1) {$E_1$};
    \node at (2.5,-1) {$E_2$};
    \node at (4,0) {$E_3$};
    \node at (1.5,0) {$E_3$};
    \node at (6.5,0) {$E_3$};
    \draw (8,1.5) -- (0,1.5) -- (0,-1.5) -- (8,-1.5) -- cycle; 
    \draw (4.0,0) circle (1.0);
    \draw (1.5,0) circle (0.375);
    \draw (0.5625,0) circle (0.140625);
    \draw (0.2109375,0) circle (0.052734375);
    \draw (0.0791015625,0) circle (0.019775390625);
    \draw (0.0296630859375,0) circle (0.007415771484375);
    \draw (0.1285400390625,0) circle (0.007415771484375);
    \draw (0.3427734375,0) circle (0.019775390625);
    \draw (0.2933349609375,0) circle (0.007415771484375);
    \draw (0.3922119140625,0) circle (0.007415771484375);
    \draw (0.9140625,0) circle (0.052734375);
    \draw (0.7822265625,0) circle (0.019775390625);
    \draw (0.7327880859375,0) circle (0.007415771484375);
    \draw (0.8316650390625,0) circle (0.007415771484375);
    \draw (1.0458984375,0) circle (0.019775390625);
    \draw (0.9964599609375,0) circle (0.007415771484375);
    \draw (1.0953369140625,0) circle (0.007415771484375);
    \draw (2.4375,0) circle (0.140625);
    \draw (2.0859375,0) circle (0.052734375);
    \draw (1.9541015625,0) circle (0.019775390625);
    \draw (1.9046630859375,0) circle (0.007415771484375);
    \draw (2.0035400390625,0) circle (0.007415771484375);
    \draw (2.2177734375,0) circle (0.019775390625);
    \draw (2.1683349609375,0) circle (0.007415771484375);
    \draw (2.2672119140625,0) circle (0.007415771484375);
    \draw (2.7890625,0) circle (0.052734375);
    \draw (2.6572265625,0) circle (0.019775390625);
    \draw (2.6077880859375,0) circle (0.007415771484375);
    \draw (2.7066650390625,0) circle (0.007415771484375);
    \draw (2.9208984375,0) circle (0.019775390625);
    \draw (2.8714599609375,0) circle (0.007415771484375);
    \draw (2.9703369140625,0) circle (0.007415771484375);
    \draw (6.5,0) circle (0.375);
    \draw (5.5625,0) circle (0.140625);
    \draw (5.2109375,0) circle (0.052734375);
    \draw (5.0791015625,0) circle (0.019775390625);
    \draw (5.0296630859375,0) circle (0.007415771484375);
    \draw (5.1285400390625,0) circle (0.007415771484375);
    \draw (5.3427734375,0) circle (0.019775390625);
    \draw (5.2933349609375,0) circle (0.007415771484375);
    \draw (5.3922119140625,0) circle (0.007415771484375);
    \draw (5.9140625,0) circle (0.052734375);
    \draw (5.7822265625,0) circle (0.019775390625);
    \draw (5.7327880859375,0) circle (0.007415771484375);
    \draw (5.8316650390625,0) circle (0.007415771484375);
    \draw (6.0458984375,0) circle (0.019775390625);
    \draw (5.9964599609375,0) circle (0.007415771484375);
    \draw (6.0953369140625,0) circle (0.007415771484375);
    \draw (7.4375,0) circle (0.140625);
    \draw (7.0859375,0) circle (0.052734375);
    \draw (6.9541015625,0) circle (0.019775390625);
    \draw (6.9046630859375,0) circle (0.007415771484375);
    \draw (7.0035400390625,0) circle (0.007415771484375);
    \draw (7.2177734375,0) circle (0.019775390625);
    \draw (7.1683349609375,0) circle (0.007415771484375);
    \draw (7.2672119140625,0) circle (0.007415771484375);
    \draw (7.7890625,0) circle (0.052734375);
    \draw (7.6572265625,0) circle (0.019775390625);
    \draw (7.6077880859375,0) circle (0.007415771484375);
    \draw (7.7066650390625,0) circle (0.007415771484375);
    \draw (7.9208984375,0) circle (0.019775390625);
    \draw (7.8714599609375,0) circle (0.007415771484375);
    \draw (7.9703369140625,0) circle (0.007415771484375);
\end{tikzpicture}
    \caption{An example of a cluster $\vec E$ with finite perimeter 
    such that $P(\vec E)=\H^1(\partial \vec E)$ 
    but $P(E_3)< \H^1(\partial E_3)$.
    }
    \label{fig:bad}
\end{center}
\end{figure}

\begin{example}[Cantor circles]
  \label{ex:circles}%
  See Figure~\ref{fig:bad} and \cite[example 2 pag. 59]{ACMM01}.
  Take a rectangle $R$ divided in two by segmente $S$ 
  on its axis.
  Let $C$ be a Cantor set with positive measure 
  constructed on $S$.
  Consider the set $E_3$ which is the union 
  of the balls with diameter on the intervals composing 
  the complementary set $S\setminus C$.
  Let $E_1$ and $E_2$ be the two connected
  components of $R\setminus \overline{E_3}$. 
  It turns out that the $3$-cluster $\vec E = (E_1, E_2, E_3)$ has finite perimeter 
  and the perimeter of $\vec E$ is represented by the Hausdorff 
  measure of the boundary:
  \[
    P(\vec E) 
    = \H^1(\partial \vec E).
  \]
  However the same is not true for each region. 
  In fact the boundary $\partial E_3$ of the region $E_3$ includes $C$ and hence 
  \[
    P(E_3)<   
    \H^1(\partial E_3).
  \]
\end{example}

\bibliographystyle{plain}

\begin{thebibliography}{10}

  \bibitem{ACMM01}
  Luigi Ambrosio, Vicent Caselles, Simon Masnou, and Jean-Michel Morel.
  \newblock Connected components of sets of finite perimeter and applications to
    image processing.
  \newblock {\em J. Eur. Math. Soc. (JEMS)}, 3(1):39--92, 2001.
  
  \bibitem{AmbTil04}
  Luigi Ambrosio and Paolo Tilli.
  \newblock {\em Topics on analysis in metric spaces}, volume~25 of {\em Oxford
    Lecture Series in Mathematics and its Applications}.
  \newblock Oxford University Press, Oxford, 2004.
  
  \bibitem{Boyd82}
  David~W. Boyd.
  \newblock The sequence of radii of the {A}pollonian packing.
  \newblock {\em Math. Comp.}, 39(159):249--254, 1982.
  
  \bibitem{CafRoqSav10}
  L.~Caffarelli, J.-M. Roquejoffre, and O.~Savin.
  \newblock Nonlocal minimal surfaces.
  \newblock {\em Comm. Pure Appl. Math.}, 63(9):1111--1144, 2010.
  
  \bibitem{Car17}
  David~G. Caraballo.
  \newblock Existence of surface energy minimizing partitions of {$\Bbb{R}^n$}
    satisfying volume constraints.
  \newblock {\em Trans. Amer. Math. Soc.}, 369(3):1517--1546, 2017.
  
  \bibitem{CesNov20}
  Annalisa Cesaroni and Matteo Novaga.
  \newblock Nonlocal minimal clusters in the plane.
  \newblock {\em Nonlinear Anal.}, 199:111945, 11, 2020.
  
  \bibitem{ColMag17}
  M.~Colombo and F.~Maggi.
  \newblock Existence and almost everywhere regularity of isoperimetric clusters
    for fractional perimeters.
  \newblock {\em Nonlinear Anal.}, 153:243--274, 2017.
  
  \bibitem{DePDeRGhi20}
  Guido De~Philippis, Antonio De~Rosa, and Francesco Ghiraldin.
  \newblock Existence results for minimizers of parametric elliptic functionals.
  \newblock {\em J. Geom. Anal.}, 30(2):1450--1465, 2020.
  
  \bibitem{Fal86}
  K.~J. Falconer.
  \newblock {\em The Geometry of Fractal Sets}.
  \newblock Cambridge Tracts in Mathematics 85. Cambridge University Press, 1986.
  
  \bibitem{Foi93}
  Joel Foisy, Manuel Alfaro, Jeffrey Brock, Nickelous Hodges, and Jason Zimba.
  \newblock The standard double soap bubble in {${\bf R}^2$} uniquely minimizes
    perimeter.
  \newblock {\em Pacific J. Math.}, 159(1):47--59, 1993.
  
  \bibitem{FrankSeiringer10}
  Rupert~L. Frank and Robert Seiringer.
  \newblock Sharp fractional {H}ardy inequalities in half-spaces.
  \newblock In {\em Around the research of {V}ladimir {M}az'ya. {I}}, volume~11
    of {\em Int. Math. Ser. (N. Y.)}, pages 161--167. Springer, New York, 2010.
  
  \bibitem{Hales01}
  Thomas~C. Hales.
  \newblock The honeycomb conjecture.
  \newblock {\em Discrete Comput. Geom.}, 25(1):1--22, 2001.
  
  \bibitem{Hirst67}
  Keith~E. Hirst.
  \newblock The {A}pollonian packing of circles.
  \newblock {\em J. London Math. Soc.}, 42:281--291, 1967.
  
  \bibitem{HutMorRitRos02}
  Michael Hutchings, Frank Morgan, Manuel Ritor\'{e}, and Antonio Ros.
  \newblock Proof of the double bubble conjecture.
  \newblock {\em Ann. of Math. (2)}, 155(2):459--489, 2002.
  
  \bibitem{Kasner378}
  Edward Kasner and Fred Supnick.
  \newblock The apollonian packing of circles.
  \newblock {\em Proceedings of the National Academy of Sciences},
    29(11):378--384, 1943.
  
  \bibitem{LawMor94}
  Gary Lawlor and Frank Morgan.
  \newblock Paired calibrations applied to soap films, immiscible fluids, and
    surfaces or networks minimizing other norms.
  \newblock {\em Pacific J. Math.}, 166(1):55--83, 1994.
  
  \bibitem{Leonardi02}
  Gian~Paolo Leonardi.
  \newblock Partitions with prescribed mean curvatures.
  \newblock {\em Manuscripta Math.}, 107(1):111--133, 2002.
  
  \bibitem{Maggi12-GMTbook}
  Francesco Maggi.
  \newblock {\em Sets of finite perimeter and geometric variational problems},
    volume 135 of {\em Cambridge Studies in Advanced Mathematics}.
  \newblock Cambridge University Press, Cambridge, 2012.
  \newblock An introduction to geometric measure theory.
  
  \bibitem{MilNee22}
  Emanuel Milman and Joe Neeman.
  \newblock The structure of isoperimetric bubbles on $\mathbb{R}^n$ and
    $\mathbb{S}^n$, 2022.
  
  \bibitem{Morgan87}
  Frank Morgan.
  \newblock {\em Geometric measure theory. A Beginner's guide}.
  \newblock Academic Press, 1987.
  \newblock 4th edition, 2014.
  
  \bibitem{MorChrSco98}
  Frank Morgan, Christopher French, and Scott Greenleaf.
  \newblock Wulff clusters in {$\bold R^2$}.
  \newblock {\em J. Geom. Anal.}, 8(1):97--115, 1998.
  
  \bibitem{NovPao05}
  M.~Novaga and E.~Paolini.
  \newblock Regularity results for boundaries in {${\Bbb R}^2$} with prescribed
    anisotropic curvature.
  \newblock {\em Ann. Mat. Pura Appl. (4)}, 184(2):239--261, 2005.
  
  \bibitem{NoPaStTo21}
  Matteo Novaga, Emanuele Paolini, Eugene Stepanov, and Vincenzo~Maria
    Tortorelli.
  \newblock Isoperimetric clusters in homogeneous spaces via concentration
    compactness.
  \newblock {\em https://arxiv.org/abs/2112.08170}, 2021.
  
  \bibitem{PaoSte13}
  Emanuele Paolini and Eugene Stepanov.
  \newblock Existence and regularity results for the steiner problem.
  \newblock {\em Calc. Var. Partial Diff. Equations}, 46(3):837--860, 2013.
  
  \bibitem{PaoTor20}
  Emanuele Paolini and Vincenzo~M. Tortorelli.
  \newblock The quadruple planar bubble enclosing equal areas is symmetric.
  \newblock {\em Calc. Var. Partial Differential Equations}, 59(1):Paper No. 20,
    9, 2020.
  
  \bibitem{Wic04}
  Wacharin Wichiramala.
  \newblock Proof of the planar triple bubble conjecture.
  \newblock {\em J. Reine Angew. Math.}, 567:1--49, 2004.
  
  \end{thebibliography}

\end{document}